\documentclass{article}

\usepackage[utf8]{inputenc}
\usepackage{lmodern}

\usepackage{amsmath}
\usepackage{amsfonts}
\usepackage{amsthm}
\usepackage{mathtools}
\usepackage{paralist}
\usepackage{xcolor}
\usepackage{hyperref}
\usepackage[colorinlistoftodos]{todonotes}
\usepackage[affil-it]{authblk}

\usepackage[a4paper, margin=1.39in]{geometry}

\usepackage{biblatex}
\addbibresource{references.bib}

\usepackage{bm}

\title{Structural convergence and algebraic roots\thanks{Supported by the European Research Council (ERC) under the European Union's Horizon 2020 research and innovation programme (ERC Synergy Grant DYNASNET, grant agreement No 810115).}
}
\author[1,2]{David Hartman}
\author[1]{Tom\'{a}\v{s} Hons}
\author[1]{Jaroslav Ne\v{s}et\v{r}il}
\affil[1]{\small Computer Science Institute, Faculty of Mathematics and Physics, Charles University, Prague, Czech Republic}
\affil[2]{\small Institute of Computer Science of the Czech Academy of Sciences, Prague, Czech Republic}
\date{}


\overfullrule=2mm


\theoremstyle{plain}
\newtheorem{theorem}{Theorem}
\newtheorem{lemma}[theorem]{Lemma}
\newtheorem{proposition}[theorem]{Proposition}

\theoremstyle{definition}

\theoremstyle{remark}

\newcommand{\limply}{\rightarrow}  

\newcommand{\lneg}{\neg}

\newcommand{\bigland}{\bigwedge}


\DeclareMathOperator\Root{Root}

\newcommand{\N}{\mathbb{N}}
\newcommand{\Q}{\mathbb{Q}}
\newcommand{\R}{\mathbb{R}}

\DeclareMathOperator\FO{FO}
\DeclareMathOperator\QF{QF}

\newcommand{\stonepar}[2]{\langle #1, #2 \rangle}



\newcommand{\powerset}[1]{2^{#1}}
\newcommand{\defset}[2]{{#1}({#2})}


\newcommand{\eps}{\varepsilon}
\newcommand{\tpl}[1]{\bm{#1}}

%
%
%
%


\begin{document}

\maketitle

\begin{abstract}
Structural convergence is a framework for convergence of graphs by Ne\v{s}et\v{r}il and Ossona de Mendez that unifies the dense (left) graph convergence and Benjamini-Schramm convergence.
They posed a problem asking whether for a given sequence of graphs $(G_n)$ converging to a limit $L$ and a vertex $r$ of $L$ it is possible to find a sequence of vertices $(r_n)$ such that $L$ rooted at $r$ is the limit of the graphs $G_n$ rooted at $r_n$.
A counterexample was found by Christofides and Kr\'{a}l', but they showed that the statement holds for almost all vertices $r$ of $L$.
We offer another perspective to the original problem by considering the size of definable sets to which the root $r$ belongs.
We prove that if $r$ is an algebraic vertex (i.e. belongs to a finite definable set), the sequence of roots $(r_n)$ always exists.
\end{abstract}


\section{Introduction}\label{sec:introduction}

The field of graph convergence studies asymptotic properties of large graphs.
The goal is to define a well-behaved notion of a limit structure that describes the limit behavior of a convergent sequence of graphs.
Several different approaches are studied.
The two most prominent types of convergence are defined for sequences of dense \cite{counting_graph_homomorphism}\cite{limits_of_dense_graph_sequences}\cite{large_networks} and sparse graphs \cite{benjamini_schramm}\cite{elek}.
The recently introduced notion of structural convergence by Ne\v{s}et\v{r}il and Ossona de Mendez offers a generalizing framework for these cases using ideas from analysis, model theory and probability \cite{unified_approach}\cite{clustering}.

Structural convergence is a framework of convergence for general relational structures; however, we follow the usual approach that we restrict to the language of graphs and rooted graphs without loss of generality.
Our arguments remain valid in the general case (e.g. as in \cite{christofides_kral}).
The \emph{Stone pairing} of a first-order formula $\phi$ in the language of graphs and a finite graph $G$, denoted by $\stonepar{\phi}{G}$, is the probability that $\phi$ is satisfied by a tuple of vertices of $G$ selected uniformly at random (for a sentence $\phi$, we set $\stonepar{\phi}{G} = 1$ if $G \models \phi$, and $\stonepar{\phi}{G} = 0$ otherwise).
A sequence of finite graphs $(G_n)$ is said to be \emph{$\FO$-convergent} if the sequence $(\stonepar{\phi}{G_n})$ of probabilities converges for each formula $\phi$.
The limit structure $L$, called \emph{modeling}, is a graph on a standard Borel space with a probability measure $\nu$ satisfying that all the first-order definable sets are measurable.
The value $\stonepar{\phi}{L}$ is defined as the measure of the set $\defset{\phi}{L}$, the set of solutions of $\phi$ in $L$, using the appropriate power of the measure $\nu$.
A modeling $L$ is a limit of an $\FO$-convergent sequence $(G_n)$ if $\lim_n \stonepar{\phi}{G_n} = \stonepar{\phi}{L}$ for each formula $\phi$.
A modeling limit does not exist for each $\FO$-convergent sequence of finite graphs.
It is known to exist for all sequences of graphs from a class $\mathcal{C}$ if and only if $\mathcal{C}$ is a nowhere dense class \cite{existence_of_modeling_limits}.

The authors of this framework asked in \cite{unified_approach} the following question:
given a sequence $(G_n)$ converging to a modeling $L$ and a vertex of $r$ of $L$, is there a sequence of vertices $(r_n)$ such that the graphs $G_n$ rooted at $r_n$ converge to $L$ rooted at $r$?
Christofides and Kr\'{a}l' \cite{christofides_kral} provided an example that the answer is negative in general.
However, they also proved that it is always possible to find such a sequence $(r_n)$ for almost all choices of the vertex $r$.
That is, if the root of $L$ is chosen at random (according to the measure $\nu$), the vertices $(r_n)$ exist with probability $1$ \cite{christofides_kral}.

In this paper, we refine the original problem by considering the root $r$ to be an \emph{algebraic} vertex of $L$.
That is, $r$ belongs to a finite definable set of $L$ \cite{tent_ziegler}.
We prove that the sequence of roots $(r_n)$ always exists under such condition.
Our main result reads as follows:

\begin{theorem}\label{thm:rooting_algebraic_vertices}
    Let $(G_n)$ be an $\FO$-convergent sequence of graphs with a modeling limit $L$ and $r$ be an algebraic vertex of $L$.
    Then there is a sequence $(r_n)$, $r_n \in V(G_n)$, such that $(G_n, r_n)$ $\FO$-converges to $(L,r)$.
\end{theorem}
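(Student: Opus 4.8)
Since $r$ is algebraic, fix a formula $\psi(x)$ with a single free variable such that $\defset{\psi}{L}$ is finite and $r \in \defset{\psi}{L}$; write $\defset{\psi}{L} = \{a_1, \dots, a_k\}$ with $a_1 = r$. The first step is to transfer finiteness to the approximating graphs. The sentence $\exists^{=k} x\, \psi(x)$ (``there are exactly $k$ elements satisfying $\psi$'') is first order, and it holds in $L$, so $\stonepar{\exists^{=k}x\, \psi(x)}{G_n} \to \stonepar{\exists^{=k}x\,\psi(x)}{L} = 1$; being a $\{0,1\}$-valued quantity, it therefore equals $1$ for all large $n$, and for such $n$ we may enumerate $\defset{\psi}{G_n} = \{b_1^n, \dots, b_k^n\}$ in an arbitrary order.

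The heart of the argument is a moment-matching lemma: for every formula $\Psi(x, \bm z)$,
\[
    \sum_{j=1}^k \Prob_{\bm z \in V(G_n)}\!\bigl[G_n \models \Psi(b_j^n, \bm z)\bigr] \;\longrightarrow\; \sum_{j=1}^k \nu^{|\bm z|}\!\bigl(\defset{\Psi(a_j, \bm z)}{L}\bigr).
\]
Indeed, for a random tuple $\bm z$ both sides equal the \emph{expected number} of solutions $x$ of $\psi$ that satisfy $\Psi(x, \bm z)$, and since $k$ is fixed this expectation is the finite linear combination $\sum_{q=0}^k q\,\stonepar{\chi_q}{\cdot}$, where $\chi_q(\bm z)$ is the (first-order, because $q \le k$) formula asserting that exactly $q$ solutions of $\psi$ satisfy $\Psi(\cdot, \bm z)$. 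The claimed convergence then follows term by term from $\FO$-convergence of $(G_n)$ to $L$.

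By a routine diagonalisation over the countably many formulas and over $\eps \to 0$, it suffices to fix finitely many formulas $\phi_1(x, \bm y_1), \dots, \phi_t(x, \bm y_t)$ and, for each large $n$, produce $r_n \in V(G_n)$ with $\bigl|\Prob_{\bm y_i}[G_n \models \phi_i(r_n, \bm y_i)] - \nu^{|\bm y_i|}(\defset{\phi_i(r, \bm y_i)}{L})\bigr|$ arbitrarily small for all $i \le t$. Consider the profile vectors $x_j^n := \bigl(\Prob_{\bm y_i}[\phi_i(b_j^n, \bm y_i)]\bigr)_{i \le t} \in [0,1]^t$ and $y_j := \bigl(\nu^{|\bm y_i|}(\defset{\phi_i(a_j, \bm y_i)}{L})\bigr)_{i \le t}$, together with the empirical measures $P_n := \tfrac1k \sum_j \delta_{x_j^n}$ and $P_L := \tfrac1k \sum_j \delta_{y_j}$ on $[0,1]^t$. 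Applying the moment-matching lemma to conjunctions $\bigwedge_{i,\ell} \phi_i(x, \bm y_i^{(\ell)})$ built from pairwise disjoint copies of the variable blocks shows that $\int p\, dP_n \to \int p\, dP_L$ for every monomial $p$, hence for every polynomial and, by Stone--Weierstrass and uniform boundedness, for every continuous function on $[0,1]^t$. Testing against a continuous function equal to $1$ at $y_1 = \bigl(\nu^{|\bm y_i|}(\defset{\phi_i(r, \bm y_i)}{L})\bigr)_{i\le t}$ and vanishing outside a small ball forces, for all large $n$, some $x_j^n$ to lie within any prescribed distance of $y_1$; taking $r_n := b_j^n$ for that index $j$ completes the step, and the diagonalisation delivers the sequence $(r_n)$ with $(G_n, r_n) \to (L, r)$.

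I expect the moment-matching lemma to be the conceptual core — in particular the observation that the relevant expected count is a \emph{finite} combination of Stone pairings, which hinges on $k$ being fixed so that ``exactly $q$ solutions'' is expressible in first-order logic; the measure-theoretic packaging and the two nested diagonalisations are then standard. It is worth noting that minimality of the defining formula $\psi$ plays no role: membership of $r$ in any finite definable set is all that is used.
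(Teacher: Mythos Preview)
Your proof is correct and takes a genuinely different route from the paper's.

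The paper first reformulates the statement as Theorem~\ref{thm:rooting_solution_of_algebraic_formulas} (producing \emph{some} root in $\defset{\xi}{L}$, then iterating to reach $r$), and builds up in three stages: a single formula via a combinatorial lemma on Boolean lattices (Lemma~\ref{lem:probability_on_boolean_lattices}) that unwinds inclusion--exclusion and invokes Newton's identities to recover the multiset $\{\stonepar{\phi}{(L,u)} : u \in \defset{\xi}{L}\}$ from Stone pairings of auxiliary formulas $\psi_{k,\ell}$; a finite collection via a careful choice of integer exponents so that one product separates all profile vectors (Lemma~\ref{lem:rooting_for_finite_collection_of_formulas}); and finally all formulas via K\"onig's lemma on a tree of partial profiles. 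Your argument replaces all of this by one observation --- that the \emph{sum} over the $k$ candidate roots of any Stone-pairing-type quantity is itself a finite linear combination of ordinary Stone pairings, because ``exactly $q$ of the $k$ solutions of $\psi$ satisfy $\Psi$'' is first-order --- and then feeds conjunctions with disjoint variable blocks into that observation to get convergence of every mixed moment of the empirical profile measures $P_n$ on $[0,1]^t$. Stone--Weierstrass upgrades this to weak convergence, and a bump function at the atom $y_1$ corresponding to $r$ extracts the desired $r_n$. In effect, both arguments rest on ``power sums determine a multiset'', but you package it analytically (moments $\Rightarrow$ weak convergence) while the paper does it algebraically (Newton's identities $\Rightarrow$ continuity of roots), and your version additionally bypasses the detour through Theorem~\ref{thm:rooting_solution_of_algebraic_formulas} by targeting the given $r$ directly. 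Your closing remark is also on point: the paper's assumption that $\defset{\xi}{L}$ is inclusion-minimal is only used to streamline the sentence case in Lemma~\ref{lem:rooting_for_single_formula}, and your argument shows it is dispensable.
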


Note that Theorem~\ref{thm:rooting_algebraic_vertices} deals with full $\FO$-convergence and not just convergence with respect to sentences (called \emph{elementary convergence}), for which it is a trivial statement (see the case of $p=0$ in Lemma~\ref{lem:rooting_for_single_formula}).

In Section~\ref{sec:example} we formulate the example from \cite{christofides_kral} in our context to indicate that Theorem~\ref{thm:rooting_algebraic_vertices} is, in a way, best possible.
Moreover, we give a simple probabilistic construction of an $\FO$-convergent sequence of graphs that does not admit an $\FO$-convegent rooting when restricting the roots to a certain definable set.

This article is an extended version of the proceeding paper \cite{eurocomb_abstract}.

\section{Notation and tools}\label{sec:notation_and_tools}

We use $\N = \{1, 2, \dots \}, \N_0 = \N \cup \{0\}$ and $[n] = \{1, 2, \dots, n\}, [n]_0 = [n] \cup \{0\}$.
All graphs are finite except modelings, which are of size continuum.
The vertex set of a graph $G$ is denoted by $V(G)$.
The set of formulas in $p$ free variables in the language of graphs is denoted by $\FO_p$ and $\FO = \bigcup_{p \in \N_0} \FO_p$ is the set of all formulas.
Tuples of vertices, free variables, etc. are denoted by boldface letters, e.g. $\tpl{x} = (x_1, \dots, x_p)$.
Multiset is a set that allows multiplicities of its elements.
The power set of a set $X$ is denoted by $\powerset{X}$.

Let $G$ be an arbitrary graph and $r$ one of its vertices.
By $(G,r)$ we denote the graph $G$ rooted at $r$.
Formally, considering $G$ as a structure in the language of graphs, we add a new constant ``$\Root$'' to the vocabulary and interpret it as $r$.
We refer to the extended language as the language of rooted graphs.
The set of formulas in the extended language is denoted by $\FO^+$.
Note that $\FO_p \subseteq \FO^+_p$.
The observation that a rooted modeling is again a modeling was a motivation for the original problem of \cite{unified_approach}.

Let $L$ be a modeling.
A formula $\phi \in \FO_p$ is algebraic in $L$ if $\defset{\phi}{L}$ is finite, where $\defset{\phi}{L} = \{\tpl{v} \in V(L)^p : L \models \phi(\tpl{v})\}$ is the set of solutions of $\phi$ in $L$.
A vertex of $L$ is algebraic if it satisfies an algebraic formula \cite{tent_ziegler}.

We recall the Newton's identities (also known as Girard-Newton formulas) that connect sums of powers with symmetric polynomials.
One of the identities states that for given $a_1, \dots, a_n \in \R$, the coefficients of the polynomial $p(x) = \prod_{i=1}^n (x - a_i)$ can be obtained by basic arithmetic opetations from values $z_1, \dots, z_n$, where $z_k = \sum_{i=1}^n a_i^k$ \cite{enumerative_combinatorics}.

It is a folklore that the roots of a polynomial continuously depend on the coefficients of the polynomial:
for a polynomial $p(x) = \prod_{i=1}^n (x - a_i) = \sum_{j=0}^n c_j x^j$ and $\eps > 0$ there is $\delta > 0$ such that each polynomial $q(x) = \sum_{j=0}^n d_j x^j$ with $|c_j - d_j| < \delta$ can decomposed as $\prod_{i=1}^n (x - b_i)$ satisfying $|a_i - b_i| < \eps$.
All the coefficients and roots are complex \cite{complex_analytic_varieties}.

These classical tools will be used in the proof of Theorem~\ref{thm:rooting_algebraic_vertices} and particularly in the key Lemma~\ref{lem:probability_on_boolean_lattices} in Section~\ref{sec:finite_boolean_lattice}.
From Newton's identities it follows that the sums $z_1, \dots, z_n$ determine the values $a_1, \dots, a_n$ up to a permutation, which is the fact we utilize.
The continuous dependence of roots on coefficients is used for polynomials created via Newton's identities with real values.
Then the statement reads as follows: for each $a_1, \dots, a_n \in \R$ and $\eps > 0$ there is $\delta > 0$ such that for each $b_1, \dots, b_n \in \R$ with $\left|\sum_{i=1}^n a_i^k - \sum_{i=1}^n b_i^k \right| < \delta$ holds that there is a permutation $\pi$ satisfying $|a_i - b_{\pi(i)}| < \eps$.

\section{Rooting in algebraic sets}\label{sec:rooting_in_algebraic_sets}

We prove Theorem~\ref{thm:rooting_algebraic_vertices} in the following equivalent form.

\begin{theorem}\label{thm:rooting_solution_of_algebraic_formulas}
    Let $(G_n)$ be an $\FO$-convergent sequence of graphs with a modeling limit $L$ and $\xi(x)$ be an algebraic formula in $L$.
    Then there is a sequence $(r_n)$, $r_n \in V(G_n)$, and a vertex $r \in \defset{\xi}{L}$ such that $(G_n, r_n)$ $\FO$-converges to $(L,r)$.
\end{theorem}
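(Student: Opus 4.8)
The plan is to build the roots $r_n$ so that the rooted graphs $(G_n,r_n)$ converge, by a compactness/diagonalization argument over all $\FO^+$ formulas, while using the algebraicity of $\xi$ to keep control of the limit. First I would fix an enumeration $\psi_1,\psi_2,\dots$ of all formulas in $\FO^+_p$ (over all $p$), and for each $m$ try to choose $r_n$ so that the finite initial segment $\stonepar{\psi_1}{(G_n,r_n)},\dots,\stonepar{\psi_m}{(G_n,r_n)}$ behaves correctly. The point is that each $\FO^+$ formula $\psi(\tpl{x})$ is, after substituting the constant $\Root$ by a free variable $y$, an ordinary $\FO$ formula $\widehat\psi(y,\tpl{x})$; so $\stonepar{\psi}{(G,r)}$ equals the conditional-type quantity obtained from $\stonepar{\widehat\psi(y,\tpl{x})}{G}$ by "plugging $r$ into $y$." The key device, which I expect to be isolated as Lemma~\ref{lem:rooting_for_single_formula} and Lemma~\ref{lem:probability_on_boolean_lattices}, is that for a single formula $\psi$ the relevant data about $(G_n,r_n)$ as $r_n$ ranges over a definable set is a finite multiset of reals in $[0,1]$ (one value per vertex of the definable set, or a weighted/bucketed version of it), and such multisets are controlled by their power sums via Newton's identities; the power sums are themselves Stone pairings of suitable $\FO$ formulas in $G_n$, hence converge.

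More concretely, I would proceed as follows. Let $\defset{\xi}{L}=\{v_1,\dots,v_N\}$ (finite, since $\xi$ is algebraic); since $|\defset{\xi}{L}|=N$ is expressible by a sentence, for $n$ large $\defset{\xi}{G_n}$ also has exactly $N$ elements, say $u^n_1,\dots,u^n_N$ — this uses that $\stonepar{\cdot}{G_n}\to\stonepar{\cdot}{L}$ on sentences. Now fix $\psi_1,\dots,\psi_m$. For each $n$ consider the map $u^n_i\mapsto\bigl(\stonepar{\psi_1}{(G_n,u^n_i)},\dots,\stonepar{\psi_m}{(G_n,u^n_i)}\bigr)\in[0,1]^m$; this induces a multiset $M_n^{(m)}$ of $N$ points of $[0,1]^m$, and likewise $L$ gives a multiset $M_L^{(m)}$ of the $N$ points $\bigl(\stonepar{\psi_j}{(L,v_i)}\bigr)_j$. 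The claim is that $M_n^{(m)}\to M_L^{(m)}$ (as multisets, i.e. up to a matching with vanishing error). To see this, note that every power sum $\sum_{i=1}^N \prod_{j}\stonepar{\psi_j}{(G_n,u^n_i)}^{k_j}$ over the $N$ roots of $\xi$ is itself a Stone pairing in $G_n$ of an $\FO$ formula: the product of conditional probabilities over the $\psi_j$ and the restriction "$y\in\defset{\xi}{}$" can be written using extra blocks of quantified variables (a standard "counting"/Fubini manipulation, valid both in finite graphs and in modelings). Hence these power sums converge to the corresponding quantities for $L$. By the real version of Newton's identities plus continuity of roots stated in Section~\ref{sec:notation_and_tools}, convergence of all power sums of a fixed-size multiset of reals forces convergence of the multiset itself (coordinate by coordinate, and then jointly across the $m$ coordinates by applying the argument to linear combinations). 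So for each $m$ there is $n_m$ and a matching $\pi_n^{(m)}$ with $\bigl|\stonepar{\psi_j}{(G_n,u^n_{\pi_n^{(m)}(i)})}-\stonepar{\psi_j}{(L,v_i)}\bigr|<1/m$ for all $j\le m$, $i\le N$, $n\ge n_m$.

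Finally I would diagonalize: pass to a subsequence and choose a fixed index $i^\star\in[N]$ that is "used" infinitely often along the matchings, set $r:=v_{i^\star}$ and $r_n:=u^n_{\pi_n(i^\star)}$ for the appropriate matching at stage $n$; then for every formula $\psi_j$, $\stonepar{\psi_j}{(G_n,r_n)}\to\stonepar{\psi_j}{(L,r)}$, which is exactly $\FO$-convergence of $(G_n,r_n)$ to $(L,r)$. (A pigeonhole on $[N]$, together with the nested structure of the initial segments, makes this legitimate; one should take the subsequence before fixing the enumeration tail.) The main obstacle — and the technical heart of the paper — is the two intertwined points in the middle step: (i) expressing each power sum of the vertex-indexed family of Stone pairings as a single honest $\FO$ Stone pairing in $G_n$ (and in $L$), uniformly, which requires care with how the constant $\Root$ is de-constantized and with Fubini for the product measure on modelings; and (ii) upgrading "all power sums converge" to "the multiset converges," i.e. the quantitative Newton-identities-plus-continuity argument, which is where the hypothesis that the definable set is \emph{finite} is essential — for an infinite definable set the multiset is infinite and no such finite reconstruction is available, matching the Christofides–Král' counterexample. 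I expect these to be handled by the cited Lemma~\ref{lem:rooting_for_single_formula} (for $p=0$ and the reduction to power sums) and Lemma~\ref{lem:probability_on_boolean_lattices} (for the Boolean-combination bookkeeping that reduces general $\psi$ to the atoms of a finite Boolean algebra of formulas, on which the power-sum computation is clean).
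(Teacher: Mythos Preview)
Your overall architecture --- track the size-$N$ multiset of Stone-pairing vectors over $\defset{\xi}{G_n}$, prove it converges via power sums and Newton's identities, then diagonalize --- matches the paper's. But the step you call a ``standard counting/Fubini manipulation'' has a real gap: the power sum $\sum_{i=1}^N \prod_j \stonepar{\psi_j}{(G_n,u^n_i)}^{k_j}$ is \emph{not} a Stone pairing of any single $\FO$ formula. De-constantizing and averaging over $y$ produces $|G_n|^{-1}\sum_i(\cdots)$, not $\sum_i(\cdots)$; and since $\defset{\xi}{L}$ has measure zero, the Stone pairing of $\xi(y)\land\Phi^-(\tpl{x},y)$ tends to $0$ and carries no information about the sum. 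What \emph{is} true --- and suffices --- is that each power sum is a fixed finite linear combination of Stone pairings of formulas that existentially quantify over the roots: for instance $\sum_i a_i^{k}=\sum_{\ell=1}^{N}\stonepar{\psi_{k,\ell}}{G_n}$, where $\psi_{k,\ell}$ asserts that at least $\ell$ elements $y\in\defset{\xi}{}$ satisfy $\bigland_s \Phi^-(\tpl{x}_s,y)$. This is precisely the content of Lemma~\ref{lem:probability_on_boolean_lattices}; you have misdescribed it as ``Boolean-combination bookkeeping that reduces general $\psi$ to the atoms of a finite Boolean algebra of formulas,'' when in fact it is the device that (continuously) extracts the multiset $A_1=\{\stonepar{\phi}{(L,u)}:u\in\defset{\xi}{L}\}$ from the honest Stone pairings $\Pr[F_\ell^k]=\stonepar{\psi_{k,\ell}}{\cdot}$ by inclusion--exclusion over the lattice $\powerset{\defset{\xi}{L}}$.

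Your handling of a finite collection of formulas also diverges from the paper. You propose multisets of $N$ points in $[0,1]^m$ recovered from mixed power sums and then ``linear combinations''; this can be made rigorous, but you still owe the $m$-dimensional continuity statement, which is not the one-variable fact quoted in Section~\ref{sec:notation_and_tools}. The paper avoids this entirely: in Lemma~\ref{lem:rooting_for_finite_collection_of_formulas} it collapses $\phi_1,\dots,\phi_k$ to a \emph{single} formula $\psi=\bigland_{i\in I}\bigland_{j=1}^{e_i}\phi_i(\tpl{x}_{i,j})$, choosing integer exponents $e_i$ so that the map $(a_1,\dots,a_k)\mapsto\prod_i a_i^{e_i}$ is injective on the finite set of attainable positive value-vectors, and then reapplies the single-formula lemma. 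Your route is more symmetric and conceptually direct; the paper's buys a clean reduction to one dimension at the price of the exponent-selection argument and a separate treatment of the formulas whose limit value is $0$.
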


Obviously, Theorem~\ref{thm:rooting_solution_of_algebraic_formulas} is implied by Theorem~\ref{thm:rooting_algebraic_vertices}.
The converse follows from fact that $\xi$ has only finitely many solutions in $L$ and we can root them iteratively one by one until we reach $r$.

Fix $(G_n)$, $L$, and $\xi$ for the rest of the paper.
Without loss of generality, assume that $\defset{\xi}{L}$ is an inclusion-minimal definable set in $L$ and $|\defset{\xi}{G_n}| = |\defset{\xi}{L}|$ for each $n$.
We prove Theorem~\ref{thm:rooting_solution_of_algebraic_formulas} in three steps.
First, we consider a single formula $\phi$ in the language of rooted graphs and show that we can find the roots $(r_n)$ and $r$ such that $\lim \stonepar{\phi}{(G_n, r_n)} = \stonepar{\phi}{(L, r)}$.
Then we consider an arbitrary finite collection of formulas $\phi_1, \dots, \phi_k$ and construct a single formula $\psi$ with the property that convergence of $\stonepar{\psi}{(G_n, r_n)}$ to $\stonepar{\psi}{(L, r)}$ implies convergence of each $\stonepar{\phi_i}{(G_n, r_n)}$ to $\stonepar{\phi_i}{(L, r)}$.
Finally, a routine use of compactness extends the previous to all formulas, which proves the theorem.

\subsection{Single formula}\label{ssec:algebraic_vertex_single_formula}

For a formula $\phi(\tpl{x}) \in \FO^+_p$, let $\phi^-(\tpl{x}, y) \in \FO_{p+1}$ be the formula created from $\phi$ by replacing each occurrence of the term ``$\Root$'' by ``$y$'' (we assume that $y$ does not appear in $\phi$).

\begin{lemma}\label{lem:rooting_for_single_formula}
    For a given $\phi \in \FO^+_p$ there is a sequence $(r_n)$, $r_n \in \defset{\xi}{G_n}$, and a vertex $r \in \defset{\xi}{L}$ such that $\lim \stonepar{\phi}{(G_n, r_n)} = \stonepar{\phi}{(L,r)}$.
\end{lemma}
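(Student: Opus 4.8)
The plan is to reduce the statement about the rooted formula $\phi$ to a statement about the unrooted formula $\phi^-(\tpl{x},y) \in \FO_{p+1}$, and then exploit the algebraicity of $\xi$ together with a counting/averaging argument. Observe that for any graph $H$ and vertex $v$, $\stonepar{\phi}{(H,v)} = \Prob_{\tpl{x}}[H \models \phi^-(\tpl{x},v)]$, i.e. it is obtained from the $(p+1)$-variable Stone pairing $\stonepar{\phi^-}{H}$ by conditioning the last coordinate to equal $v$. Summing over $v \in \defset{\xi}{H}$, the average of $\stonepar{\phi}{(H,v)}$ over the (nonempty, by our normalization $|\defset{\xi}{G_n}| = |\defset{\xi}{L}|=:m$) set of $\xi$-solutions is
\[
\frac{1}{m}\sum_{v \in \defset{\xi}{H}} \stonepar{\phi}{(H,v)} = \frac{1}{m}\,|V(H)|^{p}\cdot \nu\text{-style count of }\{(\tpl{x},v): H \models \phi^-(\tpl{x},v)\wedge \xi(v)\},
\]
which is a fixed ratio of two ordinary Stone pairings (of $\phi^- \wedge \xi(y)$ and of $\xi(y)$) and hence converges as $n\to\infty$ to the corresponding quantity in $L$.

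The core difficulty, and the reason we cannot simply take an arbitrary root, is that convergence of the \emph{average} over $\xi$-solutions does not immediately give a single solution whose individual Stone pairing converges. This is exactly where the tools recalled in Section~\ref{sec:notation_and_tools} enter, and I expect this to be the main obstacle. For each $n$, the multiset $\{\stonepar{\phi}{(G_n,v)} : v \in \defset{\xi}{G_n}\}$ consists of $m$ real numbers in $[0,1]$; likewise for $L$ we have the multiset $\{\stonepar{\phi}{(L,v)} : v \in \defset{\xi}{L}\}$. The plan is to show these multisets converge: not just their means, but all their power sums $z_k = \sum_{v} \stonepar{\phi}{(G_n,v)}^k$ converge to the corresponding power sums in $L$. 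The point is that $z_k$ can again be written (after clearing the $1/m$ factors) as a ratio of ordinary Stone pairings: the $k$-th power of a conditional probability over the last variable unfolds, by introducing $k$ disjoint fresh blocks of $p$ variables each sharing the same root variable $y$, into an ordinary Stone pairing of a formula in $kp+1$ variables conjoined with $\xi(y)$, divided by $\stonepar{\xi(y)}{\cdot}$. Each such ratio converges by $\FO$-convergence of $(G_n)$ to $L$. Having all $m$ power sums converge, Newton's identities plus continuous dependence of polynomial roots on coefficients (in the real form stated in the excerpt) give that the multiset of values $\{\stonepar{\phi}{(G_n,v)}\}_v$ converges, up to relabeling, to $\{\stonepar{\phi}{(L,v)}\}_v$.

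Finally, I would extract the desired roots. Pass to a subsequence along which, for each $n$, there is a fixed bijection between $\defset{\xi}{G_n}$ and $\defset{\xi}{L}$ matching values to within $\eps_n \to 0$; concretely, order both multisets, so that the $j$-th smallest value $\stonepar{\phi}{(G_n,v^{(j)}_n)}$ converges to the $j$-th smallest value $\stonepar{\phi}{(L,u^{(j)})}$ for every $j \in [m]$. Choose any index $j_0$ (say $j_0 = 1$), set $r_n := v^{(j_0)}_n \in \defset{\xi}{G_n}$ and $r := u^{(j_0)} \in \defset{\xi}{L}$; then $\lim_n \stonepar{\phi}{(G_n,r_n)} = \stonepar{\phi}{(L,r)}$ by construction. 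Since we only needed a subsequence, a standard diagonal argument recovers a statement for the full sequence, or one simply notes that it suffices to work with subsequences here because the later steps of the proof of Theorem~\ref{thm:rooting_solution_of_algebraic_formulas} (combining finitely many formulas into one $\psi$, then a compactness argument) will in any case pass to further subsequences. The one point to be careful about is the normalization assumptions made just before the statement — that $\defset{\xi}{L}$ is inclusion-minimal among definable sets and that $|\defset{\xi}{G_n}| = |\defset{\xi}{L}|$ for all $n$ — which guarantee the denominators $\stonepar{\xi(y)}{G_n}$ are bounded away from $0$ (being exactly $m/|V(G_n)|$ times $|V(G_n)|$, i.e. the count $m$ is constant) so that the ratios above are well behaved; inclusion-minimality will matter more in the subsequent steps than here, but it is harmless to invoke it now.
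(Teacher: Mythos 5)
Your overall strategy --- show that the whole multiset $\{\stonepar{\phi}{(G_n,v)} : v \in \defset{\xi}{G_n}\}$ converges to $\{\stonepar{\phi}{(L,v)} : v \in \defset{\xi}{L}\}$ via power sums, then Newton's identities and continuous dependence of roots on coefficients, then sort both multisets and pick matching entries --- is exactly the paper's strategy. But the step on which everything rests, convergence of the power sums $z_k$, is not justified by your argument, and as written it fails. You express $z_k$ (and likewise the average in your first display) as a ratio of ordinary Stone pairings, $\stonepar{\xi(y)\land\bigwedge_{i}\phi^-(\tpl{x}_i,y)}{G_n}\,/\,\stonepar{\xi}{G_n}$ up to the constant $m$, and claim each such ratio converges by $\FO$-convergence. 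It does not: $\stonepar{\xi}{G_n}=m/|V(G_n)|\to 0$ whenever $|V(G_n)|\to\infty$ (the interesting case; correspondingly the finite set $\defset{\xi}{L}$ typically has $\nu$-measure $0$), so the numerator tends to $0$ as well and the ratio is a $0/0$ indeterminate form. $\FO$-convergence controls the limits of numerator and denominator separately but says nothing about their rates, hence nothing about the ratio. Your parenthetical claim that the denominators are ``bounded away from $0$'' conflates the constant \emph{count} $|\defset{\xi}{G_n}|=m$ with the vanishing \emph{density} $\stonepar{\xi}{G_n}$.

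The missing idea is to encode the sum over the asymptotically negligible set $\defset{\xi}{\cdot}$ by existential quantification rather than by conditioning. Writing $S_i=\{u\in\defset{\xi}{G_n} : G_n\models\phi^-(\tpl{x}_i,u)\}$ for independent uniform $\tpl{x}_1,\dots,\tpl{x}_k$, one has $z_k=\mathbb{E}\bigl[\,|\bigcap_{i=1}^k S_i|\,\bigr]=\sum_{\ell=1}^m \Pr\bigl[\,|\bigcap_{i=1}^k S_i|\ge\ell\,\bigr]$, and each summand is an honest Stone pairing of a formula in $kp$ free variables asserting the existence of $\ell$ distinct $\xi$-witnesses $y_1,\dots,y_\ell$ with $\phi^-(\tpl{x}_i,y_j)$ for all $i,j$; these converge with no division involved. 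This is precisely what the paper does via the formulas $\psi_{k,\ell}$ and Lemma~\ref{lem:probability_on_boolean_lattices}, which recovers the multiset of values $\mu(\{u\}^\uparrow)=\stonepar{\phi}{(L,u)}$ continuously from the probabilities $\Pr[F_\ell^k]$. Two smaller points: once multiset convergence is established, sorting gives convergence of the $j$-th entries along the \emph{full} sequence, so your passage to a subsequence is unnecessary (and would be awkward, since the lemma is later invoked for the full sequence); and the case $p=0$ needs the separate elementary-convergence argument using inclusion-minimality of $\defset{\xi}{L}$, as your averaging setup is vacuous there.
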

\begin{proof}
    If $p = 0$, then either the sentence $(\forall y)(\xi(y) \limply \phi^-(y))$ or $(\forall y)(\xi(y) \limply \lneg\phi^-(y))$ is satisfied in $L$ (using the assumption that $\defset{\xi}{L}$ is an inclusion-minimal definable set); hence, it holds in each $G_n$ from a certain index on.
    Therefore, an arbitrary choice of $r_n \in \defset{\xi}{G_n}$ and $r \in \defset{\xi}{L}$ meets the conclusion.

    Let $\nu$ be the measure associated to the modeling $L$.
    Define $f_L: V(L)^p \to \powerset{\defset{\xi}{L}}$ to be the function that sends $\tpl{v}$ to the set $\{u \in \defset{\xi}{L} : L \models \phi^-(\tpl{v}, u)\}$.
    Consider the pushforward measure $\mu_L$ on $\powerset{\defset{\xi}{L}}$ of the $p$-th power of $\nu$ by $f_L$ (note that for each $u \in \xi(L)$ is the set $f_L^{-1}({u})$ measurable).
    Viewing $\powerset{\defset{\xi}{L}}$ as a lattice, we are mostly interested in the measure of the filter generated by atoms of $\powerset{\defset{\xi}{L}}$.
    Let $X^\uparrow$ denote the filter generated by $X \in \powerset{\defset{\xi}{L}}$.
    Observe that for $u \in \defset{\xi}{L}$ we have $\mu_L(\{u\}^\uparrow) = \stonepar{\phi}{(L, u)}$.
    Suppose that $|\defset{\xi}{L}| = t$ and define an ordering $R_L = (u_1, u_2, \dots, u_t)$ such that $\mu_L(R_L) = (\mu_L(\{u_i\}^\uparrow))_{i \in [t]}$ satisfies $\mu_L(\{u_1\}^\uparrow) \geq \mu_L(\{u_2\}^\uparrow) \geq \dots \geq \mu_L(\{u_t\}^\uparrow)$.
    Define similarly for each $n$ the function $f_n: V(G_n)^p \to \powerset{\defset{\xi}{G_n}}$, measure $\mu_n$ (as the pushforward of the uniform measure) and the vector $R_n$.
    
    We prove that the sequence $\big(\mu_n(R_n)\big) \subset ([0,1]^t, \|\cdot\|_\infty)$ converges to $\mu_L(R_L)$.
    Then an arbitrary choice of an index $i \in [t]$ yields the sequence $(r_n)$ and vertex $r$ as the $i$-th elements of the vectors $R_n$, resp. $R_L$.
    
    This follows from Lemma~\ref{lem:probability_on_boolean_lattices} below applied for the set $M = \powerset{\defset{\xi}{L}}$ and the probability distribution $\mu_L$.
    Observe that the probabilities $\Pr[F_\ell^k]$ are given by $\stonepar{\psi_{k,\ell}}{L}$, where $\psi_{k,\ell}(\tpl{x}_1, \dots, \tpl{x}_k) \in \FO_{k \cdot p}$ is 
    \[
        (\exists y_1, \dots, y_\ell)
        \left(
            \bigland_{i = 1}^l \xi(y_i)
            \land
            \bigland_{1 \leq i < j \leq \ell} y_i \not= y_j
            \land
            \bigland_{i = 1}^k \bigland_{j = 1}^\ell \phi^-(\tpl{x}_i, y_j)
        \right)
        .
    \]
    Due to the continuity part of Lemma~\ref{lem:probability_on_boolean_lattices}, as $\stonepar{\psi_{k,\ell}}{G_n} \to \stonepar{\psi_{k,\ell}}{L}$, we reach the conclusion.
\end{proof}

\subsection{Finite collection of formulas}\label{ssec:algebraic_vertex_finite_collection}

In this part, we use Lemma~\ref{lem:rooting_for_single_formula} to prove an analogous statement for a finite collection of formulas.

\begin{lemma}\label{lem:rooting_for_finite_collection_of_formulas}
    For given formulas $\phi_1, \dots, \phi_k$ there is a sequence $(r_n)$, $r_n \in \defset{\xi}{G_n}$, and a vertex $r \in \defset{\xi}{L}$ such that $\lim \stonepar{\phi_i}{(G_n, r_n)} = \stonepar{\phi_i}{(L,r)}$ for each $\phi_i$.
\end{lemma}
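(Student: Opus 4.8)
The plan is to reduce the case of a finite collection of formulas to the single-formula case (Lemma~\ref{lem:rooting_for_single_formula}) by encoding the $k$-tuple of Stone pairings $\big(\stonepar{\phi_1}{(G_n,r_n)}, \dots, \stonepar{\phi_k}{(G_n,r_n)}\big)$ into a single real number that is itself a Stone pairing of one cleverly constructed formula $\psi$. Concretely, I would fix a large integer parameter $N$ (to be chosen at the end, depending on the required precision) and build $\psi$ so that $\stonepar{\psi}{(H,s)}$, expanded in base $N$ (or as a sum $\sum_{i=1}^k \stonepar{\phi_i}{(H,s)} \cdot N^{-d(i)}$ with well-separated exponents $d(i)$), recovers each $\stonepar{\phi_i}{(H,s)}$ up to an arbitrarily small additive error. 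The standard device: let each $\phi_i$ have $p_i$ free variables, and on a tuple of $p_1 + \dots + p_k$ fresh variables let $\psi$ be a disjunction of ``blocks'', where the $i$-th block forces a specific pattern on the variables outside the $i$-th coordinate group (e.g. a fixed adjacency/equality pattern among auxiliary padding variables, or simply restricting those coordinates via a formula with known small measure) and asserts $\phi_i$ on the $i$-th group. Because the measure is a product measure, the contribution of block $i$ factors as $\stonepar{\phi_i}{(H,s)}$ times a fixed ``weight'' depending only on the padding pattern, and these weights can be made to sit at separated scales.

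The key steps, in order: (1) For each $i$, introduce padding variables and a quantifier-free formula $\theta_i$ on them whose Stone pairing in any graph equals a fixed rational $w_i$ with $w_1 \gg w_2 \gg \dots \gg w_k$ and $\sum_i w_i < 1$ (for instance, $\theta_i$ can pin down exact equality patterns among $m_i$ auxiliary vertices, giving $w_i$ a value like $N^{-c_i}$ that is the same in every graph since it only involves counting tuples with prescribed coincidences). (2) Form $\psi := \bigvee_{i=1}^k \big(\theta_i \land \phi_i(\text{$i$-th group})\big)$ on the disjoint union of variable groups, arranging the blocks to be mutually exclusive (easy, since the $\theta_i$ can be made to force incompatible patterns), so that by additivity over a product measure $\stonepar{\psi}{(H,s)} = \sum_{i=1}^k w_i \cdot \stonepar{\phi_i}{(H,s)}$ for every rooted graph $(H,s)$, including modelings. (3) Apply Lemma~\ref{lem:rooting_for_single_formula} to the single formula $\psi$ (and also, if needed, to each $\phi_i$ individually — but the point is one application suffices): obtain $(r_n)$ and $r$ with $\stonepar{\psi}{(G_n,r_n)} \to \stonepar{\psi}{(L,r)}$. (4) Since the $w_i$ are fixed positive constants, the map $(a_1,\dots,a_k) \mapsto \sum_i w_i a_i$ from $[0,1]^k$ to $\R$ is not injective, so this alone does not recover the individual $a_i$; this is the subtlety to handle next.

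To actually recover all $k$ coordinates I would instead run Lemma~\ref{lem:rooting_for_single_formula} with a \emph{vector-valued} bookkeeping analogous to the $R_n$ construction in its proof, OR — cleaner — iterate: build $\psi$ as above but apply Lemma~\ref{lem:rooting_for_single_formula} to the formula $\psi$ whose solution set already refines $\defset{\xi}{L}$ into the finitely many ``types'' determined by the vector $\big(\stonepar{\phi_1}{(L,u)},\dots,\stonepar{\phi_k}{(L,u)}\big)$ as $u$ ranges over $\defset{\xi}{L}$; since $\defset{\xi}{L}$ is finite (it is algebraic, and by our standing assumption inclusion-minimal), there are only finitely many such vectors, and the argument of Lemma~\ref{lem:rooting_for_single_formula} — ordering the roots of $G_n$ by the pushforward measure on the finite Boolean-lattice-like index set and invoking Lemma~\ref{lem:probability_on_boolean_lattices} for the relevant existential sentences $\psi_{k,\ell}$ — goes through verbatim with $\powerset{\defset{\xi}{L}}$ replaced by the appropriate finite poset of solution-patterns. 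Then a consistent choice of index $i$ picks out $r_n$ and $r$ simultaneously for all $\phi_1,\dots,\phi_k$.

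The main obstacle I expect is exactly the non-injectivity issue in step (4): a naive single-scalar encoding loses information once we only know convergence of $\stonepar{\psi}{(G_n,r_n)}$ rather than of the full vector, because the roots $r_n$ realizing the limit of $\stonepar{\psi}{}$ need not be the ones realizing the limit of each $\stonepar{\phi_i}{}$. The resolution — and the place where the finiteness of $\defset{\xi}{L}$ is essential — is to carry the whole finite profile of solution-patterns through the pushforward-measure / ordering argument of Lemma~\ref{lem:rooting_for_single_formula}, rather than a single number; this is why that lemma was phrased via the measure $\mu_L$ on the finite power set, and the same machinery (Newton's identities plus continuity of roots, packaged in Lemma~\ref{lem:probability_on_boolean_lattices}) applies once more. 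The remaining details (choosing the separated weights $w_i$, checking the $\theta_i$ are genuinely quantifier-free with graph-independent Stone pairing, verifying mutual exclusivity) are routine.
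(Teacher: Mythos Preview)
Your encoding via a weighted sum has a real gap. In step~(1) you need padding formulas $\theta_i$ whose Stone pairing is a fixed positive constant $w_i$ independent of the graph, but no such formula exists: on a graph with a single vertex every Stone pairing is $0$ or $1$, and more concretely an equality pattern among $m$ auxiliary variables has probability of order $|V(G)|^{-c}$, which tends to $0$ along $(G_n)$. So the ``weights'' you build are actually functions of $n$ tending to $0$, and convergence of $\stonepar{\psi}{(G_n,r_n)}$ (necessarily to $0$) says nothing about the individual $\stonepar{\phi_i}$. Your fallback, rerunning the pushforward argument of Lemma~\ref{lem:rooting_for_single_formula} on ``the appropriate finite poset of solution-patterns'', is not a proof as stated: Lemma~\ref{lem:probability_on_boolean_lattices} is proved specifically for the Boolean lattice $\powerset{M}$, and its inclusion--exclusion plus Newton-identity induction does not transfer verbatim to a product $(\powerset{M})^k$ or to an arbitrary finite poset; you would at least have to identify the correct analogue of the events $F_\ell^k$ and of the power sums that recover the multiset of $k$-tuples rather than of scalars, and that is genuine additional work you have not done.

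The paper's device is different and short. Since a conjunction on pairwise disjoint tuples of free variables has Stone pairing equal to the \emph{product} of the factors, one takes $\psi = \bigland_{i\in I}\bigland_{j=1}^{e_i}\phi_i(\tpl{x}_{i,j})$, so that $\stonepar{\psi}{(H,s)} = \prod_{i\in I}\stonepar{\phi_i}{(H,s)}^{e_i}$ in every rooted graph, finite or modeling. Because $\defset{\xi}{L}$ is finite there are only finitely many vectors $(\stonepar{\phi_i}{(L,u)})_{i\in I}$ with $u\in\defset{\xi}{L}$, and a generic choice of integer exponents $e_i$ makes $\tpl{a}\mapsto\prod_i a_i^{e_i}$ injective on this finite set of positive vectors --- this is the correct, first-order--expressible, ``separation of scales''. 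One application of Lemma~\ref{lem:rooting_for_single_formula} to this single $\psi$ then yields $(r_n)$ and $r$. There is a second subtlety you do not address: zeros kill products, so the paper first restricts to an inclusion-maximal index set $I$ for which some $v\in\defset{\xi}{L}$ has all $\stonepar{\phi_i}{(L,v)}>0$, and argues separately that for every $j\notin I$ the sequence $\stonepar{\phi_j}{(G_n,r_n)}$ is forced to $0$ by the maximality of $I$.
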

\begin{proof}
    Since for sentences any choice of $(r_n)$ and $r$ works, we assume that neither of $\phi_1, \dots, \phi_k$ is a sentence.
    
    Consider an inclusion-maximal set $I \subseteq [k]$ for which there is $v \in \defset{\xi}{L}$ such that every $i \in I$ satisfies $\stonepar{\phi_i}{(L,v)} > 0$.
    Denote $|I|$ by $k'$.
    If $I = \emptyset$, we can choose $(r_n)$ and $r$ arbitrarily; hence, assume otherwise.
    For $i \in I$ set $A_i = \{\stonepar{\phi_i}{(L,u)} : u \in \defset{\xi}{L} \} \cap (0, 1]$.
    Take a vector $\tpl{e} \in \N^{k'}$ of exponents with the property that for each distinct $\tpl{a}, \tpl{b} \in \bigtimes_{i \in I} A_i$ we have $\prod_{i \in I} a_i^{e_i} \not= \prod_{i \in I} b_i^{e_i}$.
    Such a vector exists as each $A_i$ is finite and contains only positive values.
    The set of \emph{bad} choices of rational exponents that make the values for particular $\tpl{a}, \tpl{b}$ coincide form a $(k'-1)$-dimensional hyperplane in $\Q^{k'}$.
    We can surely avoid finitely many of such hyperplanes (one for each choice of $\tpl{a}$ and $\tpl{b}$) to find a \emph{good} vector of positive rational exponents and scale them to integers.
    
    Use Lemma~\ref{lem:rooting_for_single_formula} for the formula $\psi$ of the form
    \[
        \bigland_{i \in I} \bigland_{j=1}^{e_i} \phi_i(\tpl{x}_{i,j})
        ,
    \]
    where all the tuples $\tpl{x}_{i,j}$ are pairwise disjoint, to obtain roots $(r_n)$ and $r$.
    In particular, we can take the vertex $r$ such that $\stonepar{\psi}{(L,r)} > 0$ (due to our choice of $I$).

    We have $\lim \stonepar{\phi_i}{(G_n, r_n)} = \stonepar{\phi_i}{(L, r)} > 0$ for each $i \in I$ as
    \[
        \stonepar{\psi}{(L,r)} = \prod_{i \in I}  \stonepar{\phi_i}{(L,r)}^{e_i}
        ,
    \]
    using our selection of exponents $\tpl{e}$.
    
    Also, it holds that $\lim \stonepar{\phi_j}{(G_n, r_n)} = \stonepar{\phi_j}{(L,r)} = 0$ for each $j \not\in I$: 
    for the formula $\chi = \bigland_{i \in I \cup \{j\}} \phi_i(\tpl{x}_i)$, we have $\lim \stonepar{\chi}{(G_n,r_n)} = \stonepar{\chi}{(L,r)} = 0$ due to the maximality of $I$ (this is for \emph{any} choice of $(r_n)$ and $r$).
    We have
    \[
        \stonepar{\chi}{(G_n,r_n)} = \prod_{i \in I \cup \{j\}}  \stonepar{\phi_i}{(G_n,r_n)}
    \]
    and as for some $\eps > 0$ there is $n_0$ such that $\stonepar{\phi_i}{(G_n,r_n)} > \eps$ for each $i \in I$ and $n \geq n_0$, the factor $\stonepar{\phi_j}{(G_n,r_n)}$ must tend to $0$.   
\end{proof}

We remark that the rationalization of the fact that the sequence $\big(\stonepar{\phi_j}{(G_n, r_n)}\big)$ for $j \not\in I$ even converge is the reason why we are proving Theorem~\ref{thm:rooting_solution_of_algebraic_formulas} instead of Theorem~\ref{thm:rooting_algebraic_vertices}.
That is, we are using the fact that we can choose the set $I$ (and the root $r$ for the formula $\psi$) such that any rooting $(r_n)$ makes the sequence $\stonepar{\chi}{(G_n, r_n)}$ converge to $0$.

\subsection{All formulas}\label{ssec:algebraic_vertex_all_formulas}

Fix an arbitrary ordering $\phi_1, \phi_2, \dots$ of $\FO^+$.
We call a collection of sequences of roots $(r_n^i)$, $r_n^i \in \defset{\xi}{G_n}$, and $r^i \in \defset{\xi}{L}$ \emph{extending} if for all indices $j \leq i \in \N$ satisfies that $\lim \stonepar{\phi_j}{(G_n, r_n^i)} = \stonepar{\phi_j}{(L,r^i)}$ and, moreover, for any $i' \geq i$ it holds $\stonepar{\phi_j}{(L,r^i)} = \stonepar{\phi_j}{(L,r^{i'})}$.
We prove existence of such a collection and then extract the desired sequence $(r_n)$ and vertex $r$ by diagonalization.

\begin{lemma}\label{lem:collection_of_extending_rootings}
    There exists an extending collection of sequences of roots $(r_n^i)$, $r_n^i \in \defset{\xi}{G_n}$, and $r^i \in \defset{\xi}{L}$.
\end{lemma}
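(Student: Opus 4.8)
The plan is to obtain this lemma as a finiteness packaging of Lemma~\ref{lem:rooting_for_finite_collection_of_formulas}. For $i \in \N$, I would call a vertex $v \in \defset{\xi}{L}$ \emph{$i$-good} if there is a sequence $(s_n)$ with $s_n \in \defset{\xi}{G_n}$ and $\lim_n \stonepar{\phi_j}{(G_n, s_n)} = \stonepar{\phi_j}{(L, v)}$ for every $j \leq i$; write $V_i \subseteq \defset{\xi}{L}$ for the set of $i$-good vertices. Applying Lemma~\ref{lem:rooting_for_finite_collection_of_formulas} to the finite collection $\phi_1, \dots, \phi_i$ produces a sequence $(r_n)$ and a vertex $r$ witnessing $r \in V_i$, so $V_i \neq \emptyset$; moreover, any sequence witnessing $v \in V_{i+1}$ also witnesses $v \in V_i$, so $V_{i+1} \subseteq V_i$.

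Since $\xi$ is algebraic in $L$, the set $\defset{\xi}{L}$ is finite, so the decreasing chain $V_1 \supseteq V_2 \supseteq \cdots$ of nonempty sets stabilizes and $\bigcap_{i \in \N} V_i \neq \emptyset$. I would fix any $r$ in this intersection, set $r^i := r$ for all $i$, and for each $i$ let $(r_n^i)$ be a sequence witnessing $r \in V_i$. Then for $j \leq i$ we have $\lim_n \stonepar{\phi_j}{(G_n, r_n^i)} = \stonepar{\phi_j}{(L, r)} = \stonepar{\phi_j}{(L, r^i)}$, while $\stonepar{\phi_j}{(L, r^i)} = \stonepar{\phi_j}{(L, r^{i'})}$ for $i' \geq i$ holds trivially because all $r^i$ are equal; hence the collection is extending.

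The only real content beyond Lemma~\ref{lem:rooting_for_finite_collection_of_formulas} is the interplay of the monotonicity $V_{i+1} \subseteq V_i$ with the finiteness of $\defset{\xi}{L}$, which is exactly what lets one pass from a separate rooting for every finite fragment of $\FO^+$ to a single limiting vertex $r$ that is good at all levels simultaneously --- precisely the consistency that the definition of an extending collection demands, and what makes the subsequent diagonalization in Section~\ref{ssec:algebraic_vertex_all_formulas} go through. I do not expect a genuine obstacle here; an alternative would be to construct the $r^i$ one at a time via repeated use of Lemma~\ref{lem:rooting_for_finite_collection_of_formulas}, but then maintaining the consistency clause would force one to re-derive essentially the same stabilization, so the global intersection argument is cleaner.
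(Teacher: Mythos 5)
Your proof is correct, but it takes a genuinely different route from the paper. The paper builds an infinite rooted tree whose level-$\ell$ vertices are vectors of candidate limit values $(a_1,\dots,a_\ell)$ with $a_j$ ranging over the finite set $S_j$ of possible limits of $\stonepar{\phi_j}{(G_n,r_n)}$, puts an edge between compatible vectors whenever the longer one is witnessed by an actual rooting, notes that Lemma~\ref{lem:rooting_for_finite_collection_of_formulas} makes every level reachable from the root, and extracts an infinite path by K\"{o}nig's lemma; the witnesses along that path form the extending collection, in which the vertices $r^i$ may well differ from level to level --- only the Stone pairing values are required to be consistent. You instead exploit the finiteness of $\defset{\xi}{L}$ directly: the sets $V_i$ of $i$-good vertices form a decreasing chain of nonempty subsets of a finite set, so their intersection is nonempty, and any $r$ in it serves as $r^i$ for every $i$. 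Your argument is shorter and yields a strictly stronger conclusion (a single limiting vertex at all levels, which makes the consistency clause vacuous and would even slightly simplify the subsequent diagonalization, where the paper has to pick a vertex appearing infinitely often among the $r^i$). What the paper's formulation buys in exchange is robustness: since its tree is built from limit \emph{values} rather than from vertices of $L$, the same argument survives in the setting mentioned in the concluding remarks where $(G_n)$ has no modeling limit and there is no finite set $\defset{\xi}{L}$ to intersect over, whereas your argument is tied to the existence of $L$.
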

\begin{proof}
    Let $S_j$ be the set of possible limit values of $\stonepar{\phi_j}{(G_n, r_n)}$, i.e. the values of $\mu_L(R_L)$ from Lemma~\ref{lem:rooting_for_single_formula}.
    Let $T = (V,E)$ be an infinite rooted tree defined as follows:
    $V = \bigcup V_\ell$, where $V_\ell$ is the set of vertices on the level $\ell$ defined as the Cartesian product of the sets $S_j$ for $j \leq \ell$, i.e. a vertex on the $\ell$-th level is a vector with possible limit probabilities for $\phi_1, \dots, \phi_\ell$.
    The only element in $V_0$, the empty set, is the root of the tree.
    We put an edge between $\tpl{a} \in V_\ell$ and $\tpl{b} \in V_{\ell+1}$ if $a_j = b_j$ for all $j \leq \ell$ and there are vertices $x_n \in \defset{\xi}{G_n}, x \in \defset{\xi}{L}$ such that
    \[
        \lim \stonepar{\phi_j}{(G_n, x_n)} = \stonepar{\phi_j}{(L, x)} = b_j
    \]
    for each $j \leq \ell+1$.
    Observe that if $\tpl{a}\tpl{b} \in E$, then $\tpl{b}$ is connected to the root (all edges of the path are witnessed by the vertices $(x_n)$ and $x$).
    
    By Lemma~\ref{lem:rooting_for_finite_collection_of_formulas}, there is at least one vertex in each set $V_\ell$ connected to the root.
    Thus, by K\"{o}nig's lemma, the tree contains an infinite path $\emptyset = \tpl{a}_0, \tpl{a}_1, \tpl{a}_2, \dots$ (all degrees are bounded as each $S_j$ is finite).
    The sequence $(r_n^i)$ and vertex $r^i$ are defined as the vertices $(x_n)$ and $x$ witnessing the edge from $\tpl{a}_{i-1}$ to $\tpl{a}_i$.
\end{proof}

Now we are ready to give the proof of Theorem~\ref{thm:rooting_solution_of_algebraic_formulas}.

\begin{proof}[Proof of Theorem~\ref{thm:rooting_solution_of_algebraic_formulas}]
    Let $(r_n^i)$ and $r^i$ for $i \in \N$ be the vertices from collection of extending sequences from Lemma~\ref{lem:collection_of_extending_rootings} above.
    Let $\ell_i = \lim \stonepar{\phi_i}{(L,r^i)}$ and let $N_i$ to be an index satisfying
    \begin{enumerate}[(i)]
        \item $\stonepar{\phi_j}{(G_n, r_n^i)} \in (\ell_j - 2^{-i}, \ell_j + 2^{-i})$ for each $n \geq N_i$ and $j \leq i$,
        \item $N_i > N_j$ for each $j < i$.
    \end{enumerate}
    Set $r_n = r_n^i$, where $i$ is the minimal positive integer satisfying that $n < N_{i+1}$.
    For the vertex $r$, we can set an arbitrary vertex from $\defset{\xi}{L}$ that appears infinitely many times as $r^i$.
    
    It remains to verify that for an arbitrary formula $\phi_j$ we have $\lim \stonepar{\phi_j}{(G_n, r_n)} = \ell_j = \stonepar{\phi_j}{(L, r)}$.
    Obviously, the second equality holds as $r = r^i$ for some $i \geq j$.
    For the first equality, fix $\eps > 0$ and set $k$ to be a positive integer satisfying $2^{-k} < \eps$ and $k \geq \max\{j, 2\}$.
    Then for $n \geq N_k$ we have $\stonepar{\phi_j}{(G_n, r_n)} = \stonepar{\phi_j}{(G_n, r_n^i)} \in (\ell_j - 2^{-i}, \ell_j + 2^{-i})$ for some $i \geq k$.
    Thus $\stonepar{\phi_j}{(G_n, r_n)} \in (\ell_j - \eps, \ell_j + \eps)$, which concludes the proof.
\end{proof}

\section{A lemma about finite boolean lattices}\label{sec:finite_boolean_lattice}

Let $M$ be a finite set of size $m$.
We are going to work with a probability space that allows us to consider random subsets $S_i$ of $M$ with $\Pr[S_i = X] = \mu(\{X\})$ for each $X \subseteq M$, where $\mu$ is a probability distribution on $\powerset{M}$.
Let $S_1, S_2, \dots, S_k$ be independently chosen random subsets of $M$ with distribution $\mu$, denote by $E_X^k$ the event $X \subseteq \bigcap_{i=1}^k S_i$ and by $F_\ell^k$ the event $|\bigcap_{i=1}^k S_i| \geq \ell$.
Let $X^\uparrow$ stand for the filter $\{Y \in \powerset{M} : X \subseteq Y\}$ and $\mathcal{M}_\ell$ denote the row of $\ell$-element sets, i.e. the set $\{X \subseteq M : |X|= \ell\}$.
For $\ell \in [m]_0$, we define the multiset $A_\ell = \{ \mu(X^\uparrow) : X \in \mathcal{M}_\ell \}$.

\begin{lemma}\label{lem:probability_on_boolean_lattices}
    The values of $\Pr[F_\ell^k]$ for $\ell \in [m]_0$ and $k \in \left[ \binom{m}{\ell} \right]$ continuously determine the multisets $A_\ell$ for all $\ell \in [m]_0$.
    
    That is, for each $\eps > 0$ there is $\delta > 0$ such that changing each $\Pr[F_\ell^k]$ by at most $\delta$ induces a change of values in $A_\ell$ by at most $\eps$.
\end{lemma}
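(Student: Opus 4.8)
The plan is to express each probability $\Pr[F_\ell^k]$ as an explicit triangular linear combination of the power sums of the multisets $A_0, A_1, \dots, A_m$, and then invert this system from the top row downwards, recovering one multiset $A_\ell$ at a time by means of Newton's identities. To set up, for $Z \subseteq M$ write $q_Z = \mu(Z^\uparrow)$, so that $q_\emptyset = 1$ and $q_Z \in [0,1]$ for every $Z$; put $T = S_1 \cap \cdots \cap S_k$, and note that by independence of the $S_i$ we have $\Pr[Z \subseteq T] = \prod_{i=1}^{k} \Pr[Z \subseteq S_i] = \mu(Z^\uparrow)^k = q_Z^k$. For $j \in [m]_0$ and $k \geq 1$ let $p_j(k) = \sum_{Y \in \mathcal{M}_j} q_Y^k$ denote the $k$-th power sum of the multiset $A_j = \{\, q_Y : Y \in \mathcal{M}_j \,\}$.

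The heart of the argument is the identity
\[
\Pr[F_\ell^k] \;=\; p_\ell(k) \;+\; \sum_{w=\ell+1}^{m} (-1)^{w-\ell}\binom{w-1}{w-\ell}\, p_w(k) ,
\]
valid for every $\ell \geq 1$ and every $k$, in which the coefficients are integers and, crucially, the coefficient of $p_\ell(k)$ equals $1$. (For $\ell = 0$ the claim is vacuous: $A_0 = \{1\}$ and $\Pr[F_0^k] = 1$.) To prove it I would apply M\"{o}bius inversion on the Boolean lattice $\powerset{M}$ to the relation $\Pr[Z \subseteq T] = \sum_{W \supseteq Z} \Pr[T = W]$, obtaining $\Pr[T = Z] = \sum_{W \supseteq Z} (-1)^{|W| - |Z|} q_W^k$; then sum over all $Z$ with $|Z| \geq \ell$, exchange the order of summation to group terms by $W$, and evaluate the alternating binomial sum $\sum_{s=0}^{|W|-\ell} (-1)^s \binom{|W|}{s} = (-1)^{|W|-\ell}\binom{|W|-1}{|W|-\ell}$.

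With the identity in hand I would argue by downward induction on $\ell$, from $\ell = m$ to $\ell = 0$. For $\ell = m$ we have $|A_m| = 1$ and $p_m(1) = \Pr[F_m^1]$, so $A_m = \{\Pr[F_m^1]\}$. Assuming $A_{\ell+1}, \dots, A_m$ have already been determined, all power sums $p_w(k)$ with $w > \ell$ are known, so the identity gives $p_\ell(k) = \Pr[F_\ell^k] - \sum_{w=\ell+1}^{m} (-1)^{w-\ell}\binom{w-1}{w-\ell}\, p_w(k)$ for each $k \in \left[\binom{m}{\ell}\right]$. Since $|A_\ell| = \binom{m}{\ell}$, the power sums $p_\ell(1), \dots, p_\ell\!\big(\binom{m}{\ell}\big)$ determine $A_\ell$: Newton's identities convert them into the coefficients of $\prod_{Y \in \mathcal{M}_\ell}(x - q_Y)$, and $A_\ell$ is precisely the multiset of roots of that polynomial. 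This is exactly why the range $k \in [\binom{m}{\ell}]$ appears in the statement.

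For the continuity assertion I would check that every operation in this reconstruction is continuous at the given point: a power sum $p_w(k)$ is a continuous (indeed, on $[0,1]$-valued multisets, Lipschitz, since $|a^k - b^k| \leq k|a-b|$) function of the multiset $A_w$; passing from the data $\big(\Pr[F_\ell^k]\big)_k$ and the known power sums $p_w(k)$, $w > \ell$, to $\big(p_\ell(k)\big)_k$ is an affine map with fixed integer coefficients; Newton's identities are polynomial; and recovering a real multiset from its power sums (up to reordering) is continuous by the folklore continuity of roots on coefficients in the form recorded in Section~\ref{sec:notation_and_tools}. Composing these finitely many continuous maps along the induction, a small enough perturbation $\delta$ of all the values $\Pr[F_\ell^k]$ produces a perturbation of each $A_\ell$ of size at most $\eps$. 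The only delicate point is bookkeeping how the error is amplified from one level to the next — each step multiplies it by a constant depending only on $m$ — but this is routine; the real content of the lemma is the triangular identity for $\Pr[F_\ell^k]$ together with the observation that its leading coefficient is $1$, which is what makes the system invertible.
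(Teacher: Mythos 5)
Your proof is correct and follows essentially the same route as the paper: downward induction on $\ell$, a triangular linear relation expressing $\Pr[F_\ell^k]$ through the power sums of the multisets $A_w$ with unit leading coefficient, Newton's identities to recover $A_\ell$ from its first $\binom{m}{\ell}$ power sums, and continuity of roots in coefficients. The only difference is cosmetic: where the paper derives the coefficients by inclusion--exclusion over the events $E_X^k$ and leaves them as signed cover counts $D(\ell,w)$, you obtain the explicit closed form $(-1)^{w-\ell}\binom{w-1}{w-\ell}$ via M\"{o}bius inversion on $\powerset{M}$, which indeed agrees with $D(\ell,w)$ (e.g.\ $D(2,3)=-2$) and is arguably a cleaner way to see the triangularity.
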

\begin{proof}
    We proceed by downward induction on $\ell$.
    For $\ell = m$, we have $\mu(M^\uparrow) = \Pr[F_m^1]$.
    
    Now fix $\ell < m$ and suppose that all $A_{\ell'}$ for $\ell' > \ell$ are known.
    We have that
    \[
        \Pr[F_\ell^k]
        =
        \Pr
        \left[ 
            \bigcup_{X \in \mathcal{M}_\ell} E_X^k
        \right]
        .
    \]

    We apply the inclusion-exclusion principle on the union of events.
    Observe that for $\mathcal{I} \subseteq \powerset{M}$ we have $\bigcap_{X \in \mathcal{I}} E_X^k = E_{\bigcup \mathcal{I}}^k$, where $\bigcup \mathcal{I}$ stands for $\bigcup_{X \in \mathcal{I}} X$.
    \[
        \Pr
        \left[ 
            \bigcup_{X \in \mathcal{M}_\ell} E_X^k
        \right]
        =
        \sum_{j=1}^{\binom{m}{\ell}} (-1)^{j-1} \sum_{\mathcal{I} \subseteq \mathcal{M}_\ell: |\mathcal{I}| = j}
        \Pr
        \left[ 
            E_{\bigcup \mathcal{I}}^k
        \right]
    \]
    Now we gather the terms with the same set $Y = \bigcup \mathcal{I}$ together.
    Let $C(j,\ell,r)$ be number of covers of the set $[r]$ by $j$ distinct subsets of size $\ell$ and define
    \[
        D(\ell,r) = \sum_{j=1}^{\binom{r}{\ell}} (-1)^{j-1} C(j,\ell,r)
        .
    \]
    Then we have
    \begin{align*}
        \sum_{j=1}^{\binom{m}{\ell}} (-1)^{j-1} \sum_{\mathcal{I} \subseteq \mathcal{M}_\ell: |\mathcal{I}| = j}
        \Pr
        \left[ 
            E_{\bigcup \mathcal{I}}^k
        \right]
        =
        \sum_{r=\ell}^m (-1)^{r-\ell} \sum_{Y \in \mathcal{M}_r} D(\ell,r)
        \Pr
        \left[ 
            E_Y^k
        \right]
        .
    \end{align*}
    Moving the known terms to the left-hand side, using that $D(\ell, \ell) = 1$ and $\Pr[E_Y^k] = \mu(Y^\uparrow)^k$, we obtain
    \begin{equation}\label{eq:last_step_in_lattice_lemma}
        \Pr[F_\ell^k] - \sum_{r=\ell+1}^m (-1)^{r-\ell} \sum_{Y \in \mathcal{M}_r} D(\ell,r) \mu(Y^\uparrow)^k
        =
        \sum_{Y \in \mathcal{M}_\ell} \mu(Y^\uparrow)^k
        ,
    \end{equation}
    from which we determine the multiset $A_\ell$ using Newton's identities.
    The identities are applicable as Equation~\eqref{eq:last_step_in_lattice_lemma} holds for all $k \in \left[ \binom{m}{\ell} \right]$ and all the values on the left-hand side are known by induction hypothesis (strictly speaking, we know the values $\mu(Y^\uparrow)^k$ only up to a permutation of $\mathcal{M}_r$, but the formula is symmetric).
    
    Finally, we argue that the values in $A_\ell$ continuously depend on $\Pr[F_\ell^k]$.
    This is proved by induction.
    The case $\ell = m$ is obvious.
    For $\ell < m$, the left-hand side of \eqref{eq:last_step_in_lattice_lemma} is a continuous function of values $\Pr[F_\ell^k]$ and $\mu(Y^\uparrow)^k$ for $Y \in \mathcal{M}_r$, $r > \ell$.
    The later terms continuously depend on $\Pr[F_\ell^k]$ by induction hypothesis.
    Obtaining the values in $A_\ell$ via Newton's identities is also a continuous process as the complex roots of a complex polynomial continuously depend on its coefficients.
\end{proof}

\section{Examples}\label{sec:example}

The original example with bipartite graphs of Christofides and Kr\'{a}l' \cite{christofides_kral} implies that if the root $r$ of a modeling does not belong to a finite definable set, the desired sequence of roots $(r_n)$ needs not to exist.
Moreover, observe that the root $r$ lies in a countable definable set (vertices from the smaller part $B$ can be distinguished by the property that they have no twin, i.e. another vertex with the same neighborhood).
Therefore, the finiteness of the definable set is the weakest sufficient condition for the sequence $(r_n)$ to exists regarding the cardinality of definable sets containing $r$. 

Here we give a simple probabilistic construction for the following statement.

\begin{proposition}\label{prop:example}
    There exists an $\FO$-convergent sequence of graphs $(G_n)$ and a formula $\xi(x)$ (satisfying $G_n \models (\exists x)\xi(x)$ for all $n$) with the property that there are no roots $(r_n)$, $r_n \in \defset{\xi}{G_n}$, such that the sequence $(G_n, r_n)$ is $\FO$-convergent.
\end{proposition}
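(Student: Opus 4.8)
The plan is to construct a (random) graph sequence $(G_n)$ built from a large ``inert'' part --- an independent set $A_n$ --- into which a small but growing set of ``hubs'' $B_n$ is attached, each hub joined to a random subset of $A_n$ of relative size $d_n/|A_n|$ that oscillates with the parity of $n$, say between $1/3$ and $2/3$. The oscillation will be invisible to first-order sentences about $G_n$: the hubs form a vanishing fraction, and --- decisively --- their number tends to infinity, so any quantifier reaching into $B_n$ sees a ``saturated'' structure whichever of the two densities is in force. But once we root at a vertex that pins down a single hub, the Stone pairing of an appropriate formula equals that hub's neighbourhood density, and the oscillation reappears.

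Concretely, I would take $N_n = n^2$, $b_n = \lceil(\log n)^2\rceil$, and $d_n = \lfloor N_n/3\rfloor$ for even $n$, $d_n = \lceil 2N_n/3\rceil$ for odd $n$. Let $V(G_n) = A_n\cup B_n\cup P_n\cup Q_n$, where $|A_n| = N_n$, $B_n = \{\beta_i\}_{i\le b_n}$, $P_n = \{p_i\}_{i\le b_n}$, $Q_n = \{q_i\}_{i\le b_n}$; the edges are a pendant path $q_i - p_i - \beta_i$ for each $i$, together with all edges between $\beta_i$ and a uniformly random $d_n$-element set $S_i\subseteq A_n$, the sets $S_1,\dots,S_{b_n}$ chosen independently. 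Then $A_n$ is independent, $q_i$ has degree $1$, $p_i$ degree $2$, $\beta_i$ degree $d_n+1\ge 3$, and $|V(G_n)| = N_n + 3b_n =: M_n$. Let $\xi(x)$ say ``$\deg(x) = 1$ and $x$ has a neighbour of degree $2$'' (clearly in $\FO_1$); since no hub has degree $2$, $\defset{\xi}{G_n}$ is exactly the tip set $Q_n\ne\emptyset$, so $G_n\models(\exists x)\xi(x)$.

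For the rooting claim, fix any $r_n = q_i\in Q_n$. Rooting at $q_i$, there is a formula $\phi(y)\in\FO^+_1$ expressing ``$y$ is joined to $\Root$ by a walk of length $3$'', i.e.\ $\exists z_1\exists z_2(\Root\sim z_1\wedge z_1\sim z_2\wedge z_2\sim y)$; chasing the pendant path $q_i - p_i - \beta_i$ shows $\defset{\phi}{(G_n,q_i)} = N_{G_n}(\beta_i)$, of size exactly $d_n+1$. Hence $\stonepar{\phi}{(G_n,r_n)} = (d_n+1)/M_n$, a value that does \emph{not} depend on which tip was chosen, and it has subsequential limits $1/3$ and $2/3$, so it diverges. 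Therefore no choice of roots $r_n\in\defset{\xi}{G_n}$ makes $(G_n,r_n)$ $\FO$-convergent.

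The substantial part is to show $(G_n)$ is almost surely $\FO$-convergent, i.e.\ $\stonepar{\psi}{G_n}$ converges for every $\psi\in\FO_p$. I would first note that $p$-tuples meeting $B_n\cup P_n\cup Q_n$ form an $O(b_n/M_n) = o(1)$ fraction and may be ignored, and then prove by induction on the quantifier rank of $\psi$ that, for a fixed tuple $\tpl{a}$ of distinct vertices of $A_n$, the probability $\Pr[G_n\models\psi(\tpl{a})]$ converges --- in fact to $0$ or to $1$. The mechanism is that on the structureless set $A_n$ a quantifier only yields (non-)equalities and non-adjacencies, whereas a quantifier moving into $B_n$ meets $b_n\to\infty$ \emph{independent} hubs, each realising any prescribed adjacency pattern to $\tpl{a}$ with probability bounded away from $0$ and $1$ (because $d_n/N_n\in[1/3,2/3]$); so an existential saturates to ``true'' and a universal to ``false'', uniformly in the oscillating density --- and likewise for the quantifiers probing $P_n$ and $Q_n$. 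This yields an error of the form $M_n^{O(1)}\gamma_\psi^{\,b_n} + O(b_n/M_n)$, which for the chosen rates is summable in $n$ (this is exactly where one needs $b_n\gg\log N_n$ while keeping $b_n = o(N_n)$); so $\mathbb{E}[\stonepar{\psi}{G_n}]$ converges, and since the limit is $0$ or $1$, Markov's inequality together with Borel--Cantelli upgrades this to almost-sure convergence of $\stonepar{\psi}{G_n}$ for each single $\psi$, whence (intersecting the countably many almost-sure events) $(G_n)$ is almost surely $\FO$-convergent. The step I expect to be the real obstacle is this quantifier-rank induction: making it rigorous that first-order logic can interrogate the random hub-neighbourhoods only through ``there exist at least $k$'' and ``for all'' conditions --- both of which become deterministic, and independent of the parity of $n$, once $b_n$ is large --- and controlling the error bounds uniformly over tuples and over the alternation pattern of quantifiers so that Borel--Cantelli really applies.
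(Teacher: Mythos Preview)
Your construction is correct and close in spirit to the paper's: both build random bipartite-like graphs with a large ``inert'' part and a small ``hub'' part whose relative edge density oscillates, so that rooting at (or near) a hub exposes the oscillation while the unrooted Stone pairings do not. The paper interlaces two Erd\H{o}s--R\'enyi bipartite sequences $G_n(p)$, $G_n(q)$ with parts of sizes $n^2$ and $n$ (using independent Bernoulli edges rather than your fixed-size neighbourhoods $S_i$, which spares some dependency bookkeeping), and achieves definability of the small part by first working in a language with unary marks and then removing them via triangle/pentagon gadgets; your pendant-path trick serves the same purpose more directly.

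The substantive difference is in how $\FO$-convergence is established. The paper sidesteps your quantifier-rank induction entirely: it shows that almost surely both $G_n(p)$ and $G_n(q)$ satisfy the bipartite $k$-extension property for every $k$, hence elementarily converge to the bipartite Rado graph $\mathcal{BR}$; since $\mathcal{BR}$ is ultrahomogeneous, a cited lemma (\cite[Lemma~2.28]{unified_approach} in the paper's numbering) reduces $\FO$-convergence to $\QF$-convergence, and the latter is trivial because almost every $p$-tuple lies in the large independent part. This packages the whole $0$--$1$ argument into a single model-theoretic fact. Your direct induction should go through and is more self-contained, but the step you yourself flag as the obstacle --- handling nested quantifiers over the four sorts $A_n, B_n, P_n, Q_n$ with uniform, summable error bounds --- is essentially a bespoke re-derivation, for this one sequence, of exactly what the paper obtains in one line from ultrahomogeneity of the elementary limit.
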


Note that necessarily $\lim \stonepar{\xi}{G_n} = 0$, otherwise the roots exists by the result Christofides and Kr\'{a}l' \cite{christofides_kral}.
Moreover, the sequence $|\defset{\xi}{G_n}|$ has to be unbounded, otherwise the roots exists by Theorem~\ref{thm:rooting_solution_of_algebraic_formulas} (see the second paragraph the last section).

It is an interesting question whether the sequence in Proposition~\ref{prop:example} can be chosen from a nowhere dense class \cite{sparsity}.

\subsection{Proof of Proposition~\ref{prop:example}}

First we describe the example with bipartite graphs with distinguished parts (i.e. marked by distinct unary symbols), then we show how to remove the marks to obtain a sequence of simple graphs.

Let $L = \{E,A,B\}$ be a language with one binary relation $E$ and two unary relations $A, B$.
Let $T$ be the theory of bipartite graph with vertices in the first part marked by the symbol $A$ and vertices in the other part marked by the symbol $B$.
We denote the parts of $G$, a model of $T$, by $A$ and $B$, by abuse of notation.

We say that a graph $G$ has \emph{bipartite $k$-extension property} if it satisfies the following:
\begin{enumerate}
    \item for any disjoint $X, Y \subseteq A$, $Z \subseteq B$ with $|X|+|Y|+|Z| \leq k-1$ there exists $v \in B \setminus Z$ such that $\forall x \in X: vx \in E$ and $\forall y \in : vy \not\in E$,
    \item for any disjoint $X, Y \subseteq B$, $Z \subseteq A$ with $|X|+|Y|+|Z| \leq k-1$ there exists $v \in A \setminus Z$ such that $\forall x \in X: vx \in E$ and $\forall y \in : vy \not\in E$.
\end{enumerate}

A countable model of $T$ that has bipartite $k$-extension property for every $k \in \N$ is called \emph{bipartite Rado graph} whose properties are reminiscent of the Rado graph \cite{hodges} (also known as the countable random graph).
A standard back-and-forth argument shows that there is only one bipartite Rado graph $\mathcal{BR}$ up to isomorphism and it is ultrahomogeneous.
Moreover, if $(G_n)$ is a sequence of models of $T$ with increasing size of parts and for each $k \in \N$ there is an index $n_k$ such that for $n \geq n_k$ the graph $G_n$ has the bipartite $k$-extension property, then $(G_n)$ elementarily converges to $\mathcal{BR}$.
We recall that if a sequence $(G_n)$ elementarily converges to a an ultrahomogeneous structure, the question of $\FO$-convergence reduces to $\QF$-convergence \cite[Lemma 2.28]{unified_approach}.

Let $G_n(p) = (A_n \cup B_n, E_n)$, $p \in (0,1)$, be a model of $T$ with parts of size $n^2$ and $n$ with the edge between each pair $u \in A_n, v \in B_n$ with probability $p$, independently of all the other pairs.
A direct computation yields that for each $k \in \N$ the probability that $G_n(p)$ does not posses the bipartite $k$-extension property decays exponentially with $n$.
Therefore, using Borel-Cantelli lemma, the sequence $(G_n(p))$ elementarily converges to $\mathcal{BR}$ almost surely.

Observe that $(G_n(p))$ is \emph{always} $\QF$-convergent as almost all $t$-tuples of vertices induce an independent set.
This also implies that the sequence $(H_n)$ formed by interlacing $(G_n(p))$ and $(G_n(q))$ for some $0 < p < q < 1$ is almost surely $\FO$-convergent.

We claim that for any sequence of roots $(r_n)$ from the smaller components $B_n$ the sequence $(H_n, r_n)$ does not converge.
Fix $\eps > 0$ and consider the event ``$\forall u \in B : \deg u \in pn^2 \pm \eps$''.
This event holds for all except finitely many graphs of $(G_n(p))$ by Chernoff bounds and Borel-Cantelli lemma.
Therefore, the proportion of neighbors of roots $r_n$ from $H_n$ oscillates between $pn^2 \pm \eps$ and $qn^2 \pm \eps$.
Therefore, using $\eps = (q-p)/3$, the sequence $(H_n, r_n)$ is almost surely not $\FO$-convergent as witnessed by the formula $\phi(x): x \sim \Root$.

If we want to remove the marks, we can attach e.g. a triangle to each vertex of the smaller part and a pentagon to each vertex of the larger part (i.e. preserving the property that the parts are definable).
This operation can be formalized as a gadget construction, the marks are considered to be the replaced edges by the gadgets triangle and pentagon, which in these cases preserves $\FO$-convergence \cite[Theorem~5.3, Corollary~5.5]{gadget_construction}.
\section{Concluding remarks}\label{sec:concluding_remarks}

An iterative use of Theorem~\ref{thm:rooting_algebraic_vertices}~or~\ref{thm:rooting_solution_of_algebraic_formulas} allows us to gain complete control over the algebraic elements as we can consider each of them separately.
Note that it is also possible to root solutions of algebraic formulas with multiple free variables (i.e. $p$-tuples instead of singletons) since the projection to each coordinate yields an algebraic set.

We would like to point out that Theorem~\ref{thm:rooting_solution_of_algebraic_formulas} remains valid for $\FO$-convergent sequences $(G_n)$ without a modeling limit.
The proofs are analogous except that the set $I$ in Lemma~\ref{lem:rooting_for_finite_collection_of_formulas} is defined as an inclusion-maximal set for which there are roots $(r_n)$ with the property that $\lim \stonepar{\bigland_{i \in I} \phi_i(\tpl{x}_i)}{(G_n, r_n)} > 0$. 

It can be shown that the sequence of random bipartite graphs constructed for Proposition~\ref{prop:example} admits (almost surely) a modeling limit.
This together with a rich context will be the subject of a forthcoming paper.

Besides the original problem in \cite{unified_approach}, our motivation was the study of structural convergence of sequences created via gadget construction, see \cite{gadget_construction}.
Using the result of this paper, we conclude that $\FO$-convergence is preserved by gadget construction if the gadgets replace only finitely many edges (under additional natural assumptions).

In the typical case, the modeling $L$ is of size continuum and the set of algebraic vertices (which is at most countable) has measure $0$. Hence, our results reveal only a negligible portion of vertices of $L$ for which the roots $(r_n)$ exist, which shows that there is still room for further research.

\printbibliography


\end{document}